\newcommand{\ee}{\mathbb{E}}
\newcommand{\nn}{\mathbb{N}}
\newcommand{\pp}{\mathbb{P}}
\DeclareMathOperator{\Var}{Var}
\newcommand{\cn}{\mathcal{N}}
\newcommand{\dto}{\stackrel{d}{\to}}
\newcommand{\de}{\stackrel{d}{=}}
\newcommand{\toi}{\to\infty}
\newcommand{\as}[1]{\quad\text{as }#1\toi}
\newtheorem{thm}{Theorem}
\newtheorem{lem}[thm]{Lemma}
\theoremstyle{remark}
\newtheorem{rmk}{Remark }
\newcommand{\beq}{\begin{equation}}
\newcommand{\eeq}{\end{equation}}
\newcommand{\bt}{\begin{thm}}
\newcommand{\et}{\end{thm}}
\newcommand{\bl}{\begin{lem}}
\newcommand{\el}{\end{lem}}
\begin{document}
\bibliographystyle{plainnat}
\setcitestyle{numbers}
\title{Asymptotics for sums of a function of normalized independent sums}
\author{Kamil Marcin Kosi\'nski}
\email{k.m.kosinski@uva.nl}
\address{Wydzia{\l} Matematyki, Informatyki i Mechaniki, Uniwersytet Warszawski, Warszawa, Poland}
\curraddr{Korteweg-de Vries Institute for Mathematics, University of Amsterdam, P.O. Box 94248,
1090 GE Amsterdam, The Netherlands}

\date{Semptember 17, 2008}
\subjclass[2000]{Primary G0F05}
\keywords{Products of sums of iid rv's, Limit distribution, Central limit theorem, Lognormal distribution}

\begin{abstract}
We derive a central limit theorem for sums of a function of independent sums of independent and 
identically distributed random variables. In particular we show that previously known result
from Rempa{\l}a and Weso{\l}owski (Statist. Probab. Lett. 74 (2005) 129--138), which can be obtained by applying
the logarithm as the function, holds true under weaker assumptions.
\end{abstract}
\maketitle

\section{Introduction}
Throughout this paper let $(X_{k,n})_{k=1,\ldots,n}$; $n=1,2\ldots$ be a triangular array of
independent and identically distributed (iid) random variables (rv's) with the same distribution as $X$. 
Let us define the (mutually independent) partial sums $S_{n,n}=\sum_{k=1}^n X_{k,n}$. We will
work with real functions $f$ defined at least on an interval $I$ such that $\pp(X\in I)=1$.
We will also write $\log^+ x$ for $\log (x\vee 1)$.
\par The asymptotic behavior of the product of partial sums of a sequence of iid rv's 
has been studied in several papers (see e.g. \citet{LuiQi}
for a brief review). In particular it was shown in \citet{Remiweso} that
if $(X_n)$ is a sequence of iid positive square integrable rv's with $\ee X_1=\mu$,
$\Var(X_1)=\sigma^2>0$, then setting $S_n=\sum_{k=1}^n X_k$ and $\gamma=\sigma/\mu$ we have as $n\toi$
\[
  \left(\prod_{k=1}^n\frac{S_k}{k\mu}\right)^{\frac{1}{\gamma\sqrt{n}}}\dto e^{\sqrt{2}\cn},
\]
where $\dto$ stands for convergence in distribution and $\cn$ is a standard normal random variable.
This result was extended in \citet{Qi} and \citet{LuiQi} to a general limit theorem covering
the case when the underlying distribution is integrable and belongs to the domain of attraction of a stable law with index from 
the interval $[1,2]$.
\par This study brought an interest to the array case, where we no longer consider a sequence 
$(X_n)$ but a triangular array $(X_{k,n})$. In \citet{Remiweso2} the analogous result
was obtained, namely
\beq
\label{result:rw}
  \left(n^{\frac{\gamma^2}{2}}\prod_{k=1}^n\frac{S_{k,k}}{k\mu}\right)^{\frac{1}{\gamma\sqrt{\log n}}}\dto e^{\cn},
\eeq
 under the assumption $\ee |X|^p<\infty$ for some $p>2$.
\par The purpose of this paper is to show that the above result holds true under the assumption $\ee X^2(\log^+ |X|)^{1/2}<\infty$.
However, it is no longer true in general when only $\ee X^2<\infty$ is assumed.
We will show that under this assumption, different normalisation is needed.
Furthermore, we will set our discussion in a more general setting. It is straightforward that \eqref{result:rw}
is a simple corollary from
\[
  \frac{\sum_{k=1}^n f(S_{k,k}/k)-b_n}{a_n}\dto\cn,
\]
if one sets $f(x)=\log x$ and chooses the sequences $a_n$, $b_n$ properly.
\section{Main result}
\label{main}
\begin{thm}
\label{theorem1} Suppose that $\ee |X|^2<\infty$ and denote $\mu=\ee X$, $\sigma^2=\Var(X)$.
Let $f$ be a real function with bounded third derivative on some neighbourhood of $\mu$.
Then as $n\toi$ 
\[
 \frac{\sum_{k=1}^n f(S_{k,k}/k)-b_n}{a_n}\dto\sigma f'(\mu)\cn,
\]
where
\[
a_n=\sqrt{\log n}\,,\quad b_n= n f(\mu)+\frac{f''(\mu)}{2}\sum_{k=1}^n\frac{1}{k}\ee|X-\mu|^21_{\{|X-\mu|\le\sigma k\}}.
\]
\end{thm}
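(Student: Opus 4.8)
The summands $f(S_{k,k}/k)$, $k=1,\dots,n$, are \emph{independent}, since each is built from a different row of the array; the whole argument is therefore a central limit theorem for a sum of independent (non--identically distributed) terms, combined with a Taylor expansion of $f$ about $\mu$. Put $T_k:=S_{k,k}/k-\mu=\frac1k\sum_{j=1}^k(X_{j,k}-\mu)$, so $\ee T_k=0$ and $\ee T_k^2=\sigma^2/k$, and write $v_k:=\frac1k\ee[(X-\mu)^2 1_{\{|X-\mu|\le\sigma k\}}]$, so that $b_n=nf(\mu)+\frac{f''(\mu)}2\sum_{k\le n}v_k$. On the neighbourhood where $|f'''|\le M$ one has
\[
f(S_{k,k}/k)=f(\mu)+f'(\mu)T_k+\tfrac12 f''(\mu)T_k^2+R_k,\qquad |R_k|\le\tfrac{M}{6}|T_k|^3 .
\]
Summing, the assertion reduces to proving that $a_n^{-1}f'(\mu)\sum_k T_k\dto\sigma f'(\mu)\cn$, while $a_n^{-1}\big(\sum_k T_k^2-\sum_k v_k\big)$ and $a_n^{-1}\sum_k R_k$ tend to $0$ in probability.

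The first thing to settle is the set where $S_{k,k}/k$ leaves the neighbourhood, on which $f$ is not controlled at all. Because $\ee(X-\mu)^2<\infty$, the Hsu--Robbins theorem on complete convergence gives $\sum_k\pp(|T_k|>\delta)<\infty$ for every $\delta>0$; by Borel--Cantelli only finitely many rows satisfy $S_{k,k}/k\notin[\mu-\delta,\mu+\delta]$, almost surely. I may therefore replace $f$ by a globally $C^3$ function $h$ that agrees with $f$ near $\mu$ and has $h'''$ bounded, since $\sum_k[f(S_{k,k}/k)-h(S_{k,k}/k)]$ is supported on those finitely many rows and is hence $O(1)$ a.s., negligible after division by $a_n$. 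As $h$ agrees with $f$ to second order at $\mu$, the centering $b_n$ is unchanged, and from now on the displayed expansion, with its remainder bound, holds for \emph{every} $k$.

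For the linear part I would apply the Lindeberg--Feller theorem to the independent array $\{(X_{j,k}-\mu)/k:1\le j\le k\le n\}$. Its variance is $s_n^2:=\sigma^2\sum_{k\le n}k^{-1}\sim\sigma^2\log n$, and the Lindeberg ratio equals $s_n^{-2}\sum_{k\le n}k^{-1}\ee[(X-\mu)^2 1_{\{|X-\mu|>\epsilon k s_n\}}]$, which is at most $\sigma^{-2}\ee[(X-\mu)^2 1_{\{|X-\mu|>\epsilon s_n\}}]\to0$ because $\ee(X-\mu)^2<\infty$ and $s_n\to\infty$. Hence $s_n^{-1}\sum_k T_k\dto\cn$, i.e. $a_n^{-1}f'(\mu)\sum_k T_k\dto\sigma f'(\mu)\cn$; this step uses nothing beyond the second moment.

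The delicate point---and the reason the truncation level $\sigma k$ appears in $b_n$---is the second--order term, where only two moments are available. Set $\xi_{j,k}:=(X_{j,k}-\mu)1_{\{|X_{j,k}-\mu|\le\sigma k\}}$ and $T_k^\ast:=\frac1k\sum_{j\le k}\xi_{j,k}$; then $\ee(T_k^\ast)^2=v_k+O(k^{-3})$, so $\ee\sum_{k\le n}(T_k^\ast)^2=\sum_{k\le n}v_k+O(1)$. The crucial estimate is the borderline identity
\[
\sum_{k\ge1}k^{-s}\,\ee\big[|X-\mu|^r 1_{\{|X-\mu|\le\sigma k\}}\big]
=\ee\Big[|X-\mu|^r\!\!\!\sum_{k\ge |X-\mu|/\sigma}\!\!\!k^{-s}\Big]
\le C\,\ee|X-\mu|^{\,r+1-s},
\]
which for $r-s=1$ is bounded by $C\,\ee(X-\mu)^2<\infty$: the scale $\sigma k$ is exactly the one at which the truncated second moment reproduces $v_k$ while the truncated fourth ($r=4,s=3$) and third ($r=3,s=2$) moments still sum. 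By Rosenthal's inequality $\ee(T_k^\ast)^4\le C\big(k^{-3}\ee[|X-\mu|^4 1_{\{|X-\mu|\le\sigma k\}}]+k^{-2}\sigma^4\big)$, so the case $(4,3)$ gives $\sum_k\Var((T_k^\ast)^2)=O(1)$ and hence $\sum_k(T_k^\ast)^2=\sum_k v_k+O_p(1)$; the case $(3,2)$ gives $\sum_k\ee|T_k^\ast|^3<\infty$. Finally $\sum_k k\,\pp(|X-\mu|>\sigma k)<\infty$ (equivalently $\int_0^\infty s\,\pp(|X-\mu|>s)\,ds=\tfrac12\ee(X-\mu)^2$), so by Borel--Cantelli almost surely only finitely many entries are truncated; on the remaining rows $T_k=T_k^\ast$, whence $\sum_k T_k^2=\sum_k(T_k^\ast)^2+O(1)$ a.s. and $\sum_k R_k=O_p(1)$. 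Thus the quadratic and remainder terms are $O_p(1)=o_p(a_n)$, and Slutsky's theorem combines them with the linear CLT to give $a_n^{-1}\big(\sum_k f(S_{k,k}/k)-b_n\big)\dto\sigma f'(\mu)\cn$. The main obstacle is precisely this second--order control under a bare second moment, resolved by matching the truncation scale to $\sigma k$.
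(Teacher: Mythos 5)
Your proposal is correct in substance and follows essentially the same route as the paper: Taylor expansion about $\mu$, a Lindeberg CLT for the linear term, truncation of $X-\mu$ at the level $\sigma k$, Rosenthal's inequality at exponents $4$ and $3$ combined with the borderline estimate $\sum_k k^{-s}\ee|X-\mu|^{s+1}1_{\{|X-\mu|\le\sigma k\}}\le C\,\ee(X-\mu)^2$, and Chebyshev plus Borel--Cantelli to reduce the quadratic and cubic terms to $O_\pp(1)$ (this is precisely the content of Lemmas \ref{lemma:CLT}, \ref{lemma:cor} and \ref{lemma:remainder}). The differences are cosmetic and arguably cleaner on your side: you verify Lindeberg on the doubly indexed array $\{(X_{j,k}-\mu)/k\}$ rather than on the row sums $S_{k,k}/k$ (the paper instead uses uniform integrability of $(S_{k,k}/\sqrt{k})^2$), and you handle excursions of $S_{k,k}/k$ out of the neighbourhood of $\mu$ by a global $C^3$ extension of $f$, where the paper appeals to an almost surely, eventually valid Taylor expansion.

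One step, however, is not justified as written. You invoke Rosenthal's inequality for $T_k^\ast=\frac1k\sum_{j\le k}\xi_{j,k}$, but your truncated variables $\xi_{j,k}=(X_{j,k}-\mu)1_{\{|X_{j,k}-\mu|\le\sigma k\}}$ are \emph{not centered}, and Rosenthal's inequality requires mean zero; for summands with nonzero mean the bound you state is false in general (constant summands already give a counterexample). The repair is short but must be made, and it is exactly what the paper's decomposition $T_k=\tilde{T}_k+c_k$ with $c_k=k\ee X1_{\{|X|\le k\}}$ accomplishes: since $\ee(X-\mu)=0$,
\[
\left|\ee\xi_{1,k}\right|=\left|\ee(X-\mu)1_{\{|X-\mu|>\sigma k\}}\right|
\le\frac{\ee(X-\mu)^2 1_{\{|X-\mu|>\sigma k\}}}{\sigma k}\le\frac{\sigma}{k},
\]
so writing $T_k^\ast=(T_k^\ast-\ee T_k^\ast)+\ee\xi_{1,k}$ and applying Rosenthal to the centered part recovers your third- and fourth-moment bounds up to constants. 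The same observation is what legitimizes your claim about $\ee(T_k^\ast)^2$: one has $\ee(T_k^\ast)^2=v_k+(1-\frac1k)(\ee\xi_{1,k})^2$, and the error term is $O(k^{-2})$ (in fact $o(k^{-2})$, but not $O(k^{-3})$ as you assert); since it is summable, your conclusion $\ee\sum_{k\le n}(T_k^\ast)^2=\sum_{k\le n}v_k+O(1)$ stands. With this patch the rest of your argument goes through, and in fact your treatment of the quadratic term (bounding $\Var((T_k^\ast)^2)$ by $\ee(T_k^\ast)^4$ directly) is slightly more streamlined than the paper's splitting into the terms $I_1,I_2,I_3$.
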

\begin{rmk}
If we strengthen the assumption of the square integrability of random variable $X$ to $\ee X^2(\log^+|X|)^{1/2}<\infty$,
then we can take the sequence $\tilde{b}_n=n f(\mu)+\frac{f''(\mu)\sigma^2}{2}\log n$ instead of $b_n$.
To see this we should show $\tilde{b}_n-b_n=o(\sqrt{\log n})$, which since
 $\log n-\sum_{k=1}^n\frac{1}{k}=O(1)$
is equivalent to
\[
  Q_n:=\sum_{k=1}^n\frac{1}{k}\left(\sigma^2-\ee|X-\mu|^21_{\{|X-\mu|\le\sigma k\}}\right)=o(\sqrt{\log n}).
\]
Observe that $Q_n$ is positive and
\begin{align*}
  Q_n&=
\sum_{k=1}^n\frac{1}{k}\ee|X-\mu|^21_{\{|X-\mu|>\sigma k\}}
=\ee|X-\mu|^2\sum_{\sigma k< |X-\mu|,\, k\le n}\frac{1}{k}\\
&\sim\ee|X-\mu|^2\log^+\left(n\wedge(|X-\mu|/\sigma)\right)=:\tilde{Q}_n.
\end{align*}
Therefore, if $\ee X^2(\log^+ |X|)^{1/2}<\infty$, we can use the Dominated Convergence Theorem and
infer that $\tilde{b}_n-b_n=o(\sqrt{\log n})$.
\end{rmk}
\begin{rmk}
On the other hand, if for some $\varepsilon>0$ we define a random variable $X$ by setting
$\pp(X=\pm k_n)=C/(2 k_n^2 n^2)$ and $\pp(X=0)=1-\sum_{n}\pp(|X|=k_n)$, 
where $k_n=e^{n^{2+\varepsilon}}$
and $C=6/\pi^2$. Then we simply have $\mu=0$, $\sigma^2=1$ and $\ee X^2(\log^+ |X|)^{1/2}=\infty$. 
Although, one can check that $\limsup_n Q_n/\sqrt{\log n}=\limsup_n \tilde{Q}_n/\sqrt{\log n}=\infty$,
which means that we cannot use $\tilde{b}_n$ in general.
\end{rmk}
Now let us take any positive (i.e. $I\subset(0,\infty)$), nondegenerate  random variable $X$ with $\ee X^2(\log^+ |X|)^{1/2}<\infty$
and $f(x)=\mu\log\left(x/\mu\right)$. Theorem \ref{theorem1} yields \eqref{result:rw}, that is 
the result from \citet{Remiweso2} however under weaker assumptions. Our argument shows that their result 
holds true even under the assumption of square integrability, although the normalizing sequences should be different.
Namely, instead of the term $n^{\frac{\gamma^2}{2}}$ in \eqref{result:rw} we should have
 $\exp(\frac{1}{2\mu^2}\sum_{k=1}^n\frac{1}{k}\ee|X-\mu|^21_{\{|X-\mu|\le\sigma k\}})$. 
\par The proof of Theorem \ref{theorem1} relies on the Taylor's expansion of a function $f$ in the neighbourhood of $\mu$. Linear term
in this expansion obeys a version of the classical Central Limit Theorem. Other terms are negligible 
mainly due to the Strong Law of Large Numbers (SLLN). This assertion will be made valid by a series of lemmas.
\bl
\label{lemma:CLT}
 Under the assumptions of Theorem \ref{theorem1} with $\sigma>0$
\[
 \frac{1}{\sigma\sqrt{\log n}}\sum_{k=1}^n \left(\frac{S_{k,k}}{k}-\mu\right)\dto\cn\as n. 
\]
\el
\begin{proof}
We may assume $\ee X=0$ and $\ee X^2=1$ by a simple normalization argument.
Since
\[
  \Var\left(\sum_{k=1}^n \frac{S_{k,k}}{k}\right)=\sum_{k=1}^n\frac{\Var(S_{k,k})}{k^2}=\sum_{k=1}^n\frac{1}{k}\sim \log n,
\]
then to complete the proof it is sufficient to show the Lindeberg condition for the array $(\frac{S_{k,k}}{k\sqrt{\log n}})_{k\le n}$, 
that is
\[
  \mathop{\text{\Large$\forall$}}_{r>0}\quad\frac{1}{\log n}\sum_{k=1}^n\ee \left(\frac{S_{k,k}}{k}\right)^2 1_{\{|S_{k,k}/k|>r \sqrt{\log n}\}}=o(1).
\]
Since $\{(S_{k,k}/\sqrt{k})^2\}$ is uniformly integrable,
\[
  \sup_{k\in\nn} \ee \left(\frac{S_{k,k}}{\sqrt{k}} \right)^2 1_{\{|S_{k,k}/\sqrt{k}|>r\sqrt{\log n}\}}\to0\as n,
\]
proving the Lindeberg condition.
\end{proof}
To establish the SLLN we will refer to \citet{Hsu} Law of Large Numbers (cf. \citet{Li}
for partial bibliographies and brief discussions).
\bl[Hsu-Robbins LLN]
\label{lemma:hsu-r}
For a sequence $(X_n)$ of iid rv's with $\ee X_1=0$ and $\ee X_1^2<\infty$ the series
\beq
\label{hsu-r:statement}
  \sum_{n=1}^\infty\pp(|S_n/n|>t)
\eeq
converges for every $t>0$.
\el
The condition \eqref{hsu-r:statement} implies $S_n/n\to 0$ a.s. under the Borel-Cantelli lemma. 
Moreover if $X_1$ in Lemma \ref{lemma:hsu-r} has the same distribution as $X$ in Theorem \ref{theorem1}, 
then $\pp(|S_n/n|>t)=\pp(|S_{n,n}/n|>t)$, i.e $S_{n,n}/n\to 0$ a.s. as well.
\par Before we proceed, we need some technical results derived from the elementary fact about the moments of sums of iid rv's
 (e.g., \citet[p. 23]{Hall}).
\bl[Rosenthal's inequality]
  \label{Rosenthal}
If $(X_n)$ is a sequence of iid rv's with the zero mean,
then for any $p\ge 2$
\[
  \ee|S_n|^p\leq C_p\left(n\ee |X_1|^p+n^{p/2}(\ee X_1^2)^{p/2}\right),
\]
where $C_p$ is a constant depending only on $p$.
\el
We will use this version of the Rosenthal's inequality to prove the following lemma, which
on the other hand will simplify a number of steps in the next lemma. The later will play a crucial role in the proof
of the main theorem.
\bl
\label{lemma:cor}
 Under the assumptions of Theorem \ref{theorem1}, for every $p>2$
\[
  \ee\sum_{k=1}^\infty\left(\frac{|\tilde{T}_k|}{k}\right)^p<\infty,
\]
where $\tilde{T}_k=\sum_{i=1}^k\left( X_{i,k}1_{\{|X_{i,k}|\le k\}}-\ee X_{i,k}1_{\{|X_{i,k}|\le k\}}\right)$.
\el
\begin{proof}
  Let $Y_k\de X_{i,k}1_{\{|X_{i,k}|\le k\}}-\ee X_{i,k}1_{\{|X_{i,k}|\le k\}}$, then by Lemma \ref{Rosenthal}
\begin{align*}
  \ee\sum_{k=1}^\infty\left(\frac{|\tilde{T}_k|}{k}\right)^p&\le 
C_p\sum_{k=1}^\infty\frac{1}{k^p}\left(k\ee |Y_k|^p+k^{p/2}(\ee Y_k^2)^{p/2}\right)\\
&\le C_p 2^p\left(\sum_{k=1}^\infty\frac{1}{k^{p-1}}\ee |X|^p1_{\{|X|\le k\}}+\sum_{k=1}^\infty\frac{1}{k^{p/2}}(\ee X^2)^{p/2}\right)\\
&= C_p2^p\left(\ee |X|^p\sum_{k\ge |X|}^\infty k^{1-p}+(\ee X^2)^{p/2}\sum_{k=1}^\infty\frac{1}{k^{p/2}}\right)\\
&\le C_p2^p\left( C \ee |X|^2+(\ee X^2)^{p/2}\sum_{k=1}^\infty\frac{1}{k^{p/2}}\right)<\infty,
\end{align*}
for some positive constant $C$.
\end{proof}
\bl
\label{lemma:remainder}
 Under the assumptions of Theorem \ref{theorem1} we have
\begin{align}
\label{remainder:1}
 \sum_{k=1}^n\left[\left(\frac{S_{k,k}-k\mu}{k}\right)^2-\frac{\ee|X-\mu|^21_{\{|X-\mu|\le\sigma k\}}}{k}\right]& 
=O_\pp(1),\\
\label{remainder:2}
\sum_{k=1}^n\left|\frac{S_{k,k}-k\mu}{k}\right|^3 & = O_\pp(1).
\end{align}
\el
\begin{proof} To simplify the notation we will write $S_k$ for $S_{k,k}$.
First note that to show \eqref{remainder:1} it is sufficient to prove that
\[
  \sum_{k=1}^n\left(\frac{S_k^2}{k^2}-\frac{\ee|X|^21_{\{|X|\le k\}}}{k}\right)=O_\pp(1),
\]
for a normalized random variable $X$.
Take any $\varepsilon>0$ and let $T_k:=\sum_{i=1}^k X_{i,k}1_{\{|X_{i,k}|\le k\}}$. 
Then $\sum_{k=1}^\infty\pp(S_k\ne T_k)\le \sum_{k=1}^\infty k\pp(|X|>k)<\infty$,
because $\ee X^2<\infty$. Hence we can take $R$ big 
enough that $\sum_{k=R}^\infty\pp(S_k\ne T_k)<\varepsilon/3$ and $M$ big enough that
\[
\pp\left(\left|\sum_{k=1}^{R-1}\left(\frac{S_k^2}{k^2}-\frac{\ee|X|^21_{\{|X|\le k\}}}{k}\right)\right|>M/2\right)<\varepsilon/3.
\]
So all we need to show is 
\[
\pp\left(\left|\sum_{k=R}^n\left(\frac{T_k^2}{k^2}
-\frac{\ee|X|^21_{\{|X|\le k\}}}{k}\right)\right|>M/2\right)<\varepsilon/3,
\]
which is implied by
\beq
\label{co}
 \sum_{k=1}^n\frac{T_k^2-b_k}{k^2}=O_\pp(1)
\eeq
with $b_k:=k\ee|X|^21_{\{|X|\le k\}}$. Observe that
\[
  T_k=\sum_{i=1}^k\left( X_{i,k}1_{\{|X_{i,k}|\le k\}}-\ee X_{i,k}1_{\{|X_{i,k}|\le k\}}\right)+c_k=:\tilde{T}_k+c_k,
\]
where $c_k=k\ee X1_{\{|X|\le k\}}$ and
\[
  \sum_{k=1}^n\frac{T_k^2-b_k}{k^2} =
\sum_{k=1}^n\frac{\tilde{T}_k^2-b_k}{k^2}+
\sum_{k=1}^n\frac{c_k^2}{k^2}+
2\sum_{k=1}^n\frac{c_k\tilde{T}_k}{k^2}
=:I_1+I_2+I_3.
\]
Recall that $\ee X=0$ so that
\[
 |c_k|=|k\ee X 1_{\{|X|\le k\}}|=|k\ee X 1_{\{|X|> k\}}|\le\ee |X|^2=1,  
\]
 thus $I_2=O(1)$. We also have $I_3=O_\pp(1)$ because $I_3$ is bounded in $L^2$
\[
  \ee\left(\sum_{k= 1}^n\frac{c_k\tilde{T}_k}{k^2}\right)^2 =
\sum_{k=1}^n\ee\left(\frac{c_k\tilde{T}_k}{k^2}\right)^2
\le\sum_{k=1}^n\frac{1}{k^{4}}k\Var(X1_{\{|X|\le k\}})
\le \ee X^2\sum_{k=1}^n\frac{1}{k^{3}}=O(1).
\]
$I_1$ can be rewritten as
\[
  I_1=\sum_{k=1}^n\frac{\tilde{T}_k^2-\ee \tilde{T}_k^2}{k^2}
-\sum_{k=1}^n\frac{c_k^2}{k^3}
=:I_{11}-I_{12}.
\]
But $I_{12}=O(1)$ since $|c_k|\le 1$. So in order to show \eqref{co} it is enough to 
show that
\[
  K_n:=\sum_{k=1}^n\frac{\tilde{T}_k^2-\ee \tilde{T}_k^2}{k^2}=O_\pp(1),
\]
where $\tilde{T}_k$ is a sum of independent, mean zero rv's with the same distribution as
$X1_{\{|X|\le k\}}-\ee X1_{\{|X|\le k\}}$. This however follows from Lemma \ref{lemma:cor} with $p=4$.
Indeed
\[
  \ee K_n^2 =\Var(K_n)=\sum_{k=1}^n\frac{1}{k^4}\Var(\tilde{T}_k^2)\le
  \ee\sum_{k=1}^\infty\left(\frac{\tilde{T}_k}{k}\right)^4<\infty,
\]
so the proof of \eqref{remainder:1} is complete.
\par To prove \eqref{remainder:2} it suffices to show that
$
  \sum_{k=1}^n\left|\frac{S_k}{k}\right|^3=O_\pp(1)
$
for normalized $X$, which by the same arguments as above is implied by
\[
  \sum_{k=1}^n\left|\frac{T_k}{k}\right|^3=O_\pp(1).
\]
Using the same notation we have $|T_k|^3=|\tilde{T}_k+c_k|^3\le4|\tilde{T}_k|^3+4$. Thus
\[
\sum_{k=1}^n\left|\frac{T_k}{k}\right|^3 \le
4\left(\sum_{k=1}^n\left|\frac{\tilde{T}_k}{k}\right|^3+
\sum_{k=1}^n\frac{1}{k^3}\right)
=:I_4+I_5.
\]
We obviously have $I_5=O(1)$ and by Lemma \ref{lemma:cor} with $p=3$ we get
boundedness of $I_4$ in $L^1$, which completes the proof.
\end{proof}
Now we are in the position to prove the main theorem.
\begin{proof}[Proof of Theorem \ref{theorem1}]
Take $a_n$ and $b_n$ as in the claim and denote
$c_k=\ee|X-\mu|^21_{\{|X-\mu|\le\sigma k\}}$ so $b_n=\sum_{k=1}^n(f(\mu)+f''(\mu)\frac{c_k}{2k})$.
By Taylor's expansion,
\[  f\left(\frac{S_{k,k}}{k}\right)=f(\mu)+f'(\mu)\left(\frac{S_{k,k}}{k}-\mu\right)+\frac{f''(\mu)}{2}\left(\frac{S_{k,k}}{k}-\mu\right)^2
+O\left(\left|\frac{S_{k,k}}{k}-\mu\right|^3\right)\,\text{a.s.},
\]
as a consequence of the SLLN and the assumption of boundedness of $f^{(3)}$ around $\mu$.
Using Lemma \ref{lemma:remainder} we have
\begin{align*}
  \frac{\sum_{k=1}^nf(S_{k,k}/k)-b_n}{a_n}&=
\frac{f'(\mu)}{a_n}\sum_{k=1}^n\left(\frac{S_{k,k}}{k}-\mu\right)+\frac{f''(\mu)}{2a_n}
\sum_{k=1}^n\left[\left(\frac{S_{k,k}}{k}-\mu\right)^2-\frac{c_k}{k}\right]\\
&\qquad+O\left(\frac{1}{a_n}\sum_{k=1}^n\left|\frac{S_{k,k}}{k}-\mu\right|^3\right)\,\text{a.s.}\\
&=\frac{f'(\mu)}{a_n}\sum_{k=1}^n\left(\frac{S_{k,k}}{k}-\mu\right)+o_\pp(1).
\end{align*}
By Lemma \ref{lemma:CLT}
\[
  \frac{f'(\mu)}{a_n}\sum_{k=1}^n\left(\frac{S_{k,k}}{k}-\mu\right)\dto\sigma f'(\mu)\cn,
\]
completing the proof.
\end{proof}
\section{Acknowledgements}
The author is indebted to Dr. R. Lata{\l}a for his insightful comments and supervision which led to an improved
presentation of the paper. Thanks are also due to the anonymous referee for a very careful review of the original
manuscript leading to substantial simplifications.

\end{document}